\newtheorem{theorem}{Theorem}
\newtheorem{corollary}[theorem]{Corollary}
\newtheorem{lemma}[theorem]{Lemma}
\newenvironment{proof}[1][Proof]{\noindent\textbf{#1.} }{\ \rule{0.5em}{0.5em}}
\begin{document}

\begin{center}
{\LARGE The periodic integral orbits of polynomial}

{\LARGE recursions with integer coefficients}

\medskip

Hassan Sedaghat
\footnote{Professor Emeritus of Mathematics, Virginia Commonwealth
University, Richmond, VA \ 23284, USA;
\par
Email: hsedagha@vcu.edu}

\end{center}

\medskip

\begin{abstract}
We show that polynomial recursions $x_{n+1}=x_{n}^{m}-k$ where $k,m$ are integers 
and $m$ is positive have no nontrivial periodic integral orbits for $m\geq3$. 
If $m=2$ then the recursion has integral two-cycles for infinitely many values 
of $k$ but no higher period orbits. We also show that these statements are true
for all quadratic recursions.
\end{abstract}

\bigskip

The quadratic recursion%
\begin{equation}
x_{n+1}=x_{n}^{2}+c \label{q1}%
\end{equation}
and its topological conjugate, the discrete logistic equation are iconic
examples of nonlinear dynamical systems. Starting with an initial value
$x_{0}$ with $n=0$ in (\ref{q1}) we may calculate the values of $x_{1}$,
$x_{2}$, etc recursively and generate a sequence $x_{n}$ that is considered a
(forward) \textit{orbit or solution} of (\ref{q1}).

If the parameter $c$ is a real or complex number then (\ref{q1}) can have a
wide variety of bounded orbits. For example, if $c=-2$ then (\ref{q1}) has
real periodic orbits (or cycles) of all possible periods in the interval
$[-2,2]$ depending on the initial value $x_{0}$ as well as certain bounded,
oscillating but non-periodic orbits that are called \textit{chaotic}; see,
e.g. \cite{L-Y}, \cite{HS}.

If $c$ is a rational number then the possible \textit{rational} orbits of
(\ref{q1}) are far more restricted. It is shown in \cite{W-R} that rational
fixed points (cycles of period 1), as well as rational cycles of period 2 may
occur but (in contrast to the case of real orbits) no cycles of greater
periods occur if $c$ is a rational number with odd denominator. In particular,
if $c$ is an integer (e.g. $c=-2$) then $l=1$ and we see that (\ref{q1}) has 
no integral cycles of period larger than 2. It is also shown in \cite{W-R} 
that rational cycles of period 3 occur for some rational values of $c$ with
even denominator. But the occurrence of this rational cycle of period 3 does 
not automatically imply the occurrence of rational cycles with other periods.

The non-existence proofs in \cite{W-R} use the properties of p-adic rationals.
But these results do not extend in an obvious way to the higher degree polynomial 
recursions like%
\begin{equation}
x_{n+1}=x_{n}^{m}+c \label{hd1}%
\end{equation}
where $m$ is an integer greater than 2, so it is not clear whether
(\ref{hd1}) with rational $c$ has rational periodic orbits of period 2 or
greater in the higher degree cases.

\medskip

In this paper we discuss the periodic \textit{integral} orbits of
(\ref{hd1}), i.e. orbits that are contained in the set $\mathbb{Z}$ of all
integers. Specifically, if $k$ is an integer then all orbits of%
\begin{equation}
x_{n+1}=x_{n}^{m}-k \label{hd}%
\end{equation}
with integer initial values are contained in $\mathbb{Z}$. For most integer
initial values these integral orbits are unbounded but the question that we
answer here is whether \textit{all} orbits, except for possible integer fixed
points of (\ref{hd}) are unbounded.

As we noted above, the answer is negative if $m=2$ but the periodic integral
orbits were scarce in this case. We show that if $m>2$ then there are no
periodic orbits of (\ref{hd}), except possibly for
one or two integer fixed points. The main idea is simply that the points of an
integral orbit do not come close to each other.

We also show that the results for $m=2$ are true for the general quadratic 
recursion
\[
x_{n+1}=ax_{n}^{2}+bx_{n}+c
\]
where $a,b,c$ are integers ($a$ nonzero) and determine the equations for the
fixed points and the two-cycles in terms of the coefficients $a,b,c$.

\section{There are no nontrivial periodic orbits if $m>2$}

We begin by recalling a few basic concepts and identifying exceptional and/or
trivial cases. The recursion in (\ref{hd}) can be written as%
\[
x_{n+1}=f(x_{n}),\quad f(x)=x^{m}-k
\]

A fixed point of the function $f(x)$, i.e. a solution of the equation
\[
f(x)=x
\]
is also called a \textit{constant solution} or a \textit{fixed point} of the
recursion (\ref{hd}). We also call it a \textit{trivial orbit}.

A \textit{periodic} \textit{orbit }or \textit{cycle} of (\ref{hd}) is a
sequence $r_{0},r_{1},r_{2},\ldots$ where there is a positive integer $p$ such
that
\begin{equation}
r_{n+p}=r_{n}\text{\quad for all }n\text{.} \label{p}%
\end{equation}

If $p$ is the \textit{smallest} positive integer for which (\ref{p}) is true
then $p$ is the \textit{period} of $r_{n}$; we also call a cycle of period $p$
a \textit{p-cycle} for short. Finally, a \textit{bounded orbit }of (\ref{hd})
is a bounded sequence $r_{0},r_{1},r_{2},\ldots$ that satisfies (\ref{hd}).

\medskip

Note that if $m=1$ then the recursion%
\begin{equation}
x_{n+1}=x_{n}-k \label{thd}%
\end{equation}
has the general solution%
\[
x_{n}=x_{0}-nk
\]

From this we conclude that every solution of (\ref{thd}) diverges to $\infty$
if $k<0$ and to $-\infty$ if $k>0$. If $k=0$ then every solution of
(\ref{thd}) is constant (every initial value is fixed). So in the rest of 
the paper we assume that $m\geq2$.

If $k=0$ then (\ref{hd}) has two integer fixed points 0 and 1 if $m$ is even
and three integer fixed points 0, $\pm1$ if $m$ is odd.

If $k\not =0$ and $m$ is odd then using Descartes rule of signs and the
intermediate value theorem we see that (\ref{hd}) has only one real fixed
point $\gamma$ that is positive if $k>0$ (i.e. $k\geq1$) and negative if $k<0$
(i.e. $k\leq-1$). Further, it is easy to see that all (real) orbits of
(\ref{hd}) diverge to $\infty$ if $x_{0}>\gamma$ and to $-\infty$ if
$x_{0}<\gamma$. In particular, (\ref{hd}) has no nontrivial, bounded integral
orbits if $m$ is odd.

\medskip

What about possible integer fixed points? An integer $j$ satisfies
$x=x^{m}-k$ if and only if
\[
k=j^{m}-j=j(j^{m-1}-1)
\]

It follows that for every integer $j$ the equation%
\[
x_{n+1}=x_{n}^{m}-j(j^{m-1}-1)
\]
has a fixed point at $j$. In particular, \textit{(\ref{hd}) has integer fixed
points for infinitely many values of} $k$.

For example, if $m=3$ and $k$ is the (even) number
\[
k=j(j^{2}-1)=j(j-1)(j+1)
\]
then (\ref{hd}) has a fixed point at $x=j$. Similarly, if $m=4$ and
\begin{equation}
k=j(j^{3}-1)=j(j-1)(j^{2}+j+1) \label{m4}%
\end{equation}
then (\ref{hd}) has a fixed point at $x=j$.

\medskip

\textit{In the rest of the paper we assume that }$m$\textit{ is even if its
value is not specified.}

\medskip

If $k=1$ then can quickly check that (\ref{hd}) has an integral cycle of 
period 2: $-1,0,-1,0,\ldots$ for 
\textit{every} even value of $m$. Further, the initial value $x_{0}=1$ leads to 
this cycle in one step. On the other hand, if $|x_{0}|\geq2$ then for all $m\geq2$
\begin{align*}
x_{1}  &  \geq2^{m}-1\geq3\\
x_{2}  &  \geq3^{m}-1\geq8\\
&  \vdots
\end{align*}
which is an increasing sequence of integers that rapidly diverges to $\infty$.
It follows that all other orbits are unbounded.

\medskip

We now consider the remaining cases where $k,m\geq2$ and $m$ is even.

\medskip

Now $f(x)$ has a minimum at 0 and two fixed points $\alpha$ and $\beta$
where
\[
\alpha<0<\beta
\]
and%
\begin{equation}
\alpha^{m}=\alpha+k,\qquad\beta^{m}=\beta+k \label{efp}%
\end{equation}

Note that $k>0$ if and only if $\beta>1$ by the right hand side equation 
above. Further,
\[
k=\beta^{m}-\beta=\beta(\beta^{m-1}-1)
\]
so $k>\beta$ if $\beta>2^{1/(m-1)}$. In particular, \textit{If }$\beta\geq
2$\textit{ then} $k>\beta$.

The next result shows that it is only necessary to consider orbits of (\ref{hd}) 
that start in $[-\beta,\beta]$ even though this interval is usually not invariant.

\begin{lemma}
\label{L1}For each $m\geq2$ if $|x_{0}|>\beta >1$ then the orbit generated by 
(\ref{hd}) is unbounded, eventually increasing to $\infty$.
\end{lemma}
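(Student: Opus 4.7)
The plan is to reduce the two-sided hypothesis $|x_0|>\beta$ to the one-sided condition $x_0>\beta$ by using the parity of $m$, then to establish strict monotone increase of the orbit via an auxiliary function, and finally to rule out boundedness by appealing to the uniqueness of the positive fixed point $\beta$.

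For the reduction, since $m$ is even (per the standing assumption of this section), if $x_0<-\beta$ then
\[
x_1 = x_0^m - k = |x_0|^m - k > \beta^m - k = \beta
\]
by (\ref{efp}), so after a single step the orbit has entered $(\beta,\infty)$, and I may assume $x_0>\beta$ from the outset. For the monotonicity step, I would introduce the auxiliary function $g(x):=f(x)-x = x^m - x - k$, noting that $g(\beta)=0$ by (\ref{efp}) and that $g'(x)=mx^{m-1}-1$ is strictly positive on $[\beta,\infty)$: indeed $\beta>1$ and $m\geq 2$ give $mx^{m-1}\geq m\beta^{m-1}>1$ there. Consequently $g$ is strictly increasing on $[\beta,\infty)$ and strictly positive on $(\beta,\infty)$, and the identity $x_{n+1}-x_n = g(x_n)$ then yields by an immediate induction that $(x_n)$ is strictly increasing and stays above $\beta$.

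To conclude that $x_n\to\infty$, I would argue by contradiction: a bounded monotone sequence converges to a finite limit $L>\beta$, and continuity of $f$ would force $L=f(L)$, producing a positive fixed point distinct from $\beta$ and contradicting its uniqueness established just before the lemma. I do not foresee any serious obstacle in this plan; the only place requiring real care is the sign of $g'$ on $[\beta,\infty)$, which is exactly where the hypothesis $\beta>1$ is essential — without it, an additional estimate showing the orbit cannot hover in $[-1,1]$ would be needed.
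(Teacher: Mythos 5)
Your proof is correct, but it diverges from the paper's argument in the final step, and the comparison is worth noting. Both proofs begin identically: evenness of $m$ and $|x_0|>\beta$ give $x_1=x_0^m-k>\beta^m-k=\beta$, after which one shows the orbit stays in $(\beta,\infty)$ and is strictly increasing there (your route via the sign of $g(x)=f(x)-x$ on $(\beta,\infty)$ is just a repackaging of the paper's observation that $f(x)>x$ for $x>\beta$). Where you part ways is in deducing divergence. The paper argues quantitatively: it factors $x_n-x_{n-1}=x_{n-1}^m-x_{n-2}^m$ as $(x_{n-1}-x_{n-2})\sum_{i=1}^m x_{n-1}^{m-i}x_{n-2}^{i-1}$, bounds the sum below by $m\beta^{m-1}>1$, and telescopes to get an explicit geometric lower bound $x_n>x_0+(x_1-x_0)\bigl((m\beta^{m-1})^n-1\bigr)/(m\beta^{m-1}-1)$, which shows the escape is exponentially fast with rate $f'(\beta)$ --- information the author explicitly cares about in the remark following the lemma. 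You instead argue softly: a bounded increasing sequence would converge to some $L>\beta$ with $f(L)=L$ by continuity, contradicting the uniqueness of the positive fixed point (which the paper does establish just before the lemma, via Descartes' rule for even $m$ and $k\geq 2$). Your route is shorter and avoids the induction on successive differences, at the cost of yielding no rate; both are complete proofs of the statement as written.
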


\begin{proof}
First, note that $f(x)>x$ for all $x>\beta$ so that if $|x_{0}|>\beta$ then
\[
x_{1}=f(x_{0})>x_{0}%
\]

Further,
\begin{align*}
x_{1}  &  =x_{0}^{m}-k>\beta^{m}-k=\beta\\
x_{2}  &  =x_{1}^{m}-k>\beta^{m}-k=\beta\\
&  \vdots
\end{align*}
so by induction, $x_{n}>\beta$\ for every $n\geq1$. Now,
\begin{align*}
x_{n}-x_{n-1}  &  =x_{n-1}^{m}-x_{n-2}^{m}\\
&  =(x_{n-1}-x_{n-2})\sum_{i=1}^{m}x_{n-1}^{m-i}x_{n-2}^{i-1}\\
&  >(x_{n-1}-x_{n-2})\sum_{i=1}^{m}\beta^{m-i}\beta^{i-1}\\
&  =m\beta^{m-1}(x_{n-1}-x_{n-2})
\end{align*}

Doing the same calculation for $x_{n-1}-x_{n-2}$ then for $x_{n-2}-x_{n-3}$
and so on, we obtain by induction%
\[
x_{n}-x_{n-1}>(m\beta^{m-1})^{n-1}(x_{1}-x_{0})
\]

Therefore,%
\begin{align*}
x_{n}  &  =x_{0}+\sum_{i=1}^{n}(x_{i}-x_{i-1})\\
&  >x_{0}+\sum_{i=1}^{n}(m\beta^{m-1})^{i-1}(x_{1}-x_{0})\\
&  =x_{0}+(x_{1}-x_{0})\frac{(m\beta^{m-1})^{n}-1}{m\beta^{m-1}-1}%
\end{align*}

Due to the occurrence of the n-th power of $m\beta^{m-1}>m$ in the last
quantity it follows that $x_{n}\rightarrow\infty$ (exponentially fast) as
$n\rightarrow\infty$ if $x_{0}\not \in \lbrack-\beta,\beta]$.
\end{proof}

\medskip

It is worth a mention that $m\beta^{m-1}=f^{\prime}(\beta)$ is
the slope of the tangent line to the graph of $f(x)$ at $x=\beta$. We could
use this tangent line for an alternative proof but that was not necessary.

Also notice that the number $\beta$ is considerably smaller than $k$; for
instance, for $\beta\leq2$ the right hand side equation in (\ref{efp}) gives
\[
k=\beta(\beta^{m-1}-1)\leq2(2^{m-1}-1)=2^{m}-2
\]

If $m=4$ and $k=2$ then by (\ref{m4}) and the intermediate value theorem
$\alpha=-1$ and $1<\beta<2$. So $k>\beta$ and the interval 
$[-\beta,\beta]\subset\lbrack-2,2]$ contains the 3 integers $0$, $\pm1$.
A quick calculation shows that if $x_{0}=\pm1$ then $x_{1}=-1$ which is the 
fixed point, and further, if $x_{0}=0$ then $x_{1}=-2<-\beta$ so $x_{n}%
\rightarrow\infty$ as $n\rightarrow\infty$. So with $k=2$ the
only periodic integral orbit of (\ref{hd}) is the trivial one $x_{n}=-1$ for
all $n$. There is one more bounded integral orbit, namely the one that starts
at $x_{0}=1$.

Quick calculations show that (\ref{hd}) has precisely one integral periodic
orbit if $k=1$, and two fixed points if $k=0$ and no bounded solutions
(integer or not) if $k<0$ (i.e. $k\leq-1$) because
\[
x^{m}-k\geq x^{m}+1>x
\]
for all even integers $m$ and all real values of $x$. Now we determine what
happens when $k\geq3$.

\medskip

Considering orbits that start in $[-\beta,\beta]$, due to the y-axis symmetry
we need only check the integers in the interval $[0,\beta]$. If $x_{0}%
\in\lbrack0,\beta]$ then%
\[
|x_{1}|=|x_{0}^{m}-k|\leq\beta
\]
if and only if
\[
-\beta+k\leq x_{0}^{m}\leq\beta+k=\beta^{m}%
\]

Only the left hand side inequality poses a new restriction, namely,
\[
x_{0}\geq(k-\beta)^{1/m}%
\]

Let%
\[
\gamma=(k-\beta)^{1/m}%
\]
and note that
\begin{align*}
x_{0}  &  \in[\gamma,\beta]\Longrightarrow x_{1}\in\lbrack-\beta,\beta]\\
x_{0}  &  \in[0,\gamma)\Longrightarrow x_{1}<-\beta
\end{align*}

Thus it is necessary that the interval $[\gamma,\beta]$ contain an integer.
In this regard, the next lemma is important.

\begin{lemma}
If $m$ is even and larger than 2 then the length of the interval $[\gamma,\beta]$
is less than 1. Further, $\beta -\gamma\rightarrow 0$ for each such value of $m$
as $k\rightarrow\infty$.
\end{lemma}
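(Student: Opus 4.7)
The plan is to exploit the identity
\[
\beta^m - \gamma^m = (\beta + k) - (k - \beta) = 2\beta,
\]
which follows instantly from $\beta^m = \beta + k$ and $\gamma^m = k - \beta$. Factoring the left side,
\[
\beta^m - \gamma^m = (\beta - \gamma)\bigl(\beta^{m-1} + \beta^{m-2}\gamma + \cdots + \gamma^{m-1}\bigr),
\]
and discarding every summand except $\beta^{m-1}$ (legitimate because $\gamma \geq 0$), one obtains the clean upper bound
\[
\beta - \gamma \leq \frac{2\beta}{\beta^{m-1}} = \frac{2}{\beta^{m-2}}.
\]

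It then suffices to check $\beta^{m-2} > 2$, i.e., $\beta > 2^{1/(m-2)}$, under the standing hypothesis $k \geq 3$. Setting $b := 2^{1/(m-2)}$ and using that $t \mapsto t^m - t$ is strictly increasing on $(1,\infty)$ and equals $k$ at $t = \beta$, the inequality $\beta > b$ reduces to verifying $b^m - b < 3$. Since $b^{m-2} = 2$ gives $b^m = 2b^2$, this further reduces to $2b^2 - b < 3$. Because $m \geq 4$ forces $b \leq \sqrt{2}$ and $2b^2 - b$ is increasing in $b$ on the relevant range, the extremum occurs at $m = 4$, giving $4 - \sqrt{2} \approx 2.59 < 3$. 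Combined with the bound from the previous paragraph, this yields $\beta - \gamma < 1$.

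The asymptotic statement $\beta - \gamma \to 0$ then requires no extra work: from $k = \beta^m - \beta$, $\beta \to \infty$ as $k \to \infty$, and since $m - 2 \geq 2$ the bound $2/\beta^{m-2}$ vanishes, so the claim is immediate for each fixed even $m > 2$.

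The main obstacle I anticipate is the tightness of the numerical step $4 - \sqrt{2} < 3$ at the extremal exponent $m = 4$: this is precisely where the hypothesis $k \geq 3$ is being used, and a weaker hypothesis such as $k \geq 2$ would invalidate the crude bound $2/\beta^{m-2}$ (which can exceed $1$ in that regime). Any attempt to extend the result to smaller $k$ would require retaining additional terms of the factored sum rather than discarding everything beyond $\beta^{m-1}$, but as stated the discard suffices.
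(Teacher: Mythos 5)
Your proof is correct, and its core --- the identity $\beta^{m}-\gamma^{m}=2\beta$, the factorization, and discarding all but the leading term to get $\beta-\gamma\le 2/\beta^{m-2}$ --- is exactly the paper's argument. Where you diverge is in how you certify $\beta^{m-2}>2$. The paper substitutes the cruder lower bound $\beta>k^{1/m}$ to obtain $\beta-\gamma<2/k^{(m-2)/m}$, which forces the restriction $k\ge 4$ (since $2^{m/(m-2)}$ reaches $4$ at $m=4$); the leftover case $k=3$ is then patched separately inside the proof of the subsequent theorem by an explicit check of the integers in $[-\beta,\beta]$. You instead compare $\beta$ directly with $2^{1/(m-2)}$ via the monotonicity of $t\mapsto t^{m}-t$ on $(1,\infty)$ and the numerical fact $4-\sqrt{2}<3$, which establishes the bound for all $k\ge 3$, i.e., for the full range of the standing hypothesis in force when the lemma is stated. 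That is a genuine (if small) improvement: your version would let the theorem that follows dispense with its separate $k=3$ computation. The asymptotic claim is handled equivalently in both proofs, yours via $\beta\to\infty$ and the paper's via the explicit bound $2/k^{(m-2)/m}\to 0$.
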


\begin{proof}
Observe that%
\[
\beta^{m}-\gamma^{m}=k+\beta-(k-\beta)=2\beta
\]
which yields%
\[
\beta-\gamma=\frac{2\beta}{\sum_{i=1}^{m}\beta^{m-i}\gamma^{i-1}}<\frac
{2}{\beta^{m-2}}%
\]

Recall that $\beta>k^{1/m}$ so%
\begin{equation}
\beta-\gamma<\frac{2}{k^{(m-2)/m}} \label{bmg}%
\end{equation}

With $m>2$ the right hand side of the above inequality is less than 1 if%
\begin{align*}
k^{(m-2)/m}  &  >2\\
k  &  >2^{m/(m-2)}%
\end{align*}

The largest value of $m/(m-2)=1+2/(m-2)$ occurs at the smallest value of $m$,
i.e. $m=4$. Thus%
\[
2^{1+2/(m-2)}\leq4\quad\text{for }m=4,6,8,\ldots
\]

So if $k\geq4$ then the right hand side of (\ref{bmg}) is less than 1 for all
$m=4,6,8,\ldots$ and we obtain%
\[
\beta-\gamma<1
\]

Further, for each fixed value of $m$ (\ref{bmg}) also shows that $\beta
-\gamma\rightarrow0$ as $k\rightarrow\infty$. 
\end{proof}

\medskip

The above lemma in particular implies that $[\gamma,\beta]$ contains at most one 
integer. 

\begin{theorem}
The equation in (\ref{hd}) has no nontrivial integral periodic orbits for $m\geq3$.
\end{theorem}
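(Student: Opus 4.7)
The plan is to combine Lemmas 1 and 2 with the case reductions already in the text to trap any bounded integer orbit inside a two-point set $\{j,-j\}$, and then to use the even symmetry of $f$ to show that such a set cannot support a cycle of period $\geq 2$.

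First I would dispatch the odd case $m\geq 3$, which the text has already handled: there is a unique real fixed point $\gamma$, and every other real orbit diverges monotonically to $\pm\infty$, so no nontrivial periodic orbit exists. This reduces the theorem to even $m\geq 4$. The special cases $k\leq -1$, $k=0$, $k=1$, $k=2$ have all been disposed of by direct inspection earlier, so the substantive work lies in the regime $k\geq 3$ (and in fact $k\geq 4$, where Lemma 2 guarantees $\beta-\gamma<1$ cleanly).

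In this regime I would argue as follows. By Lemma 1, any bounded orbit must satisfy $|x_n|\leq \beta$ for all $n$. The identity $\gamma^m=k-\beta$ gives the implication ``$x_0\in[0,\gamma)\Rightarrow x_1<-\beta$'' recorded just before Lemma 2, and combined with the even symmetry $f(-x)=f(x)$ this forces every iterate of a bounded orbit to satisfy $|x_n|\in[\gamma,\beta]$. Lemma 2 shows that this interval has length less than $1$, so it contains at most one integer, call it $j$. Consequently every bounded integer orbit is confined to $\{j,-j\}$; if no such integer $j$ exists in $[\gamma,\beta]$, there are no bounded integer orbits at all and the theorem holds vacuously.

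The final step rules out nontrivial cycles inside $\{j,-j\}$. Because $m$ is even, $f(j)=f(-j)=j^m-k$, so every $x_n$ with $n\geq 1$ equals the single value $j^m-k$. Membership in $\{j,-j\}$ then forces either $j^m-k=j$ (making $j$ a fixed point) or $j^m-k=-j$ (in which case $f(-j)=j^m-k=-j$ shows that $-j$ is a fixed point). Either way the orbit stabilizes at a fixed point from $n=1$ onward, ruling out any cycle of period $\geq 2$. The step I expect to require the most care is the transition from ``$|x_n|\leq\beta$'' to ``$|x_n|\in[\gamma,\beta]$'', since $f$ is not invariant on $[-\beta,\beta]$ and a careful appeal to Lemma 1 is needed to promote even a single excursion below $-\beta$ into an unbounded orbit; once this reduction is in place the rest of the argument is essentially two lines.
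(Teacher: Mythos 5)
Your overall strategy is the same as the paper's: Lemma \ref{L1} confines a bounded orbit to $[-\beta,\beta]$, the implication $|x_n|<\gamma\Rightarrow x_{n+1}<-\beta$ together with the even symmetry of $f$ confines it further to $[-\beta,-\gamma]\cup[\gamma,\beta]$, the second lemma caps the number of admissible integers at one value of $|x_n|$, and evenness of $m$ kills period two. Your closing step is in fact a slight streamlining of the paper's: rather than deriving $x_1=-x_0$ and $x_2=x_0=x_1$ to force $x_0=0$, you observe that $f$ is constant on $\{j,-j\}$, so the orbit is a fixed point from $n=1$ onward. That part is correct.

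There is, however, one concrete hole: the case $k=3$ is never closed. You correctly note that the special cases $k\le 2$ were handled earlier and that Lemma 2's bound $\beta-\gamma<2/k^{(m-2)/m}$ only yields $\beta-\gamma<1$ for $k\ge 4$; for $k=3$ and $m=4$ the bound gives roughly $2/\sqrt{3}>1$, so your ``at most one integer $j$ in $[\gamma,\beta]$'' step is not available there, and with two admissible integers $j_1<j_2$ a two-cycle of the form $j_1\to-j_2\to j_1$ is not excluded by your symmetry argument alone. The parenthetical ``(and in fact $k\ge 4$)'' acknowledges the issue but the argument never returns to it. The paper closes this case by direct computation: for $k=3$ one has $\beta<2$ (since $k$ is increasing in $\beta$ and $\beta=2$ would give $k=2^m-2\ge 14$), so the only candidate integers are $0,\pm1$, each of which maps to $x_1\le-2<-\beta$ and hence escapes by Lemma \ref{L1}, while the real fixed points lie in $(-2,-1)$ and $(1,2)$ and are not integers. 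You need to add this (or an equivalent) verification for your proof to be complete.
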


\begin{proof}
We discussed the non-existence for all odd values of $m$ earlier, so now assume 
that $m$ is even and also for this theorem, $m\geq4$.

We first show that if $x_{0}\in(\gamma,\beta)$ then \textit{a non-constant
periodic orbit may exist only if }$x_{1}\in(-\beta,-\gamma)\cup(\gamma,\beta)$.

Note that since the x-intercept of $f(x)=x^{m}-k$ is $k^{1/m}%
\in\lbrack\gamma,\beta]$ and $f$ is increasing for $x>0$ it follows that $f$
maps $[k^{1/m},\beta]$ one-to-one onto $[0,\beta]$ with $f(k^{1/m})=0$.
Similarly, $f$ maps the interval $[\gamma,k^{1/m}]$ onto $[-\beta,0]$ with
$f(\gamma)=-\beta$. Since $x_{0}\in(\gamma,\beta)$ it follows that $x_{1}%
\in(-\beta,\beta)$.

In order that $x_{0}$ and $x_{1}$ be part of a periodic orbit it is necessary
that $x_{2}=f(x_{1})\in(-\beta,\beta)$ also. This is possible only if
$x_{1}\in(-\beta,-\gamma)\cup(\gamma,\beta)$ in which case $|x_{1}|\in
(\gamma,\beta)$.

Notice that \textit{an integral orbit of (\ref{hd}) cannot have a period
greater than 2} because the set $(-\beta,-\gamma)\cup(\gamma,\beta)$ contains
at most two integers.

If $x_{0},x_{1}$ form an integral orbit of period 2 for (\ref{hd}) with
$x_{0}\in(\gamma,\beta)$ then $x_{1}$ must be in the interval $(-\beta
,-\gamma)$. It follows that
\begin{equation}
x_{1}=-x_{0} \label{x10}%
\end{equation}

We also require that $x_{2}=x_{0}$ to close the cycle. Therefore,%
\begin{equation}
x_{0}=x_{2}=x_{1}^{m}-k=(-x_{0})^{m}-k=x_{1} \label{x01}%
\end{equation}
where the last equality holds since $m$ is even. The equalities (\ref{x10})
and (\ref{x01}) hold simultaneously if and only if $x_{0}=0$ which contradicts
our assumption about where $x_{0}$ is. Therefore, there can be no orbits of
period 2 for (\ref{hd}).

We have shown that if $k\geq4$ then the only
possible integral cycles of (\ref{hd}) are the fixed points. We still need
to examine the values of $k<4$, i.e. $k\leq3$. We have already checked the
solutions of (\ref{hd}) for $k\leq2$. Now, if $k=3$ then $\beta<2$ since $k$
is an increasing function of $\beta$ for $\beta\geq1$ and at $\beta=2$%
\[
k=2^{m}-2\geq2^{4}-2=14
\]

With $\beta<2$ the only integers in $[-\beta,\beta]$ are 0 and $\pm1$. With
$k=3$, if $x_{0}=0,\pm1$ then $x_{1}\leq-2<-\beta$ so $x_{n}\rightarrow\infty$
as $n\rightarrow\infty$. \ Further, the fixed points of $x^{m}-3$ are the
zeros of $x^{m}-x-3$ which by the intermediate value theorem are in the
intervals $(-2,-1)$ and $(1,2)$ for all $m=2,4,6,\ldots$ Since these fixed
points are not integers it follows that (\ref{hd}) has no periodic integral
orbits with $k=3$. This completes the proof of the theorem.
\end{proof}

\medskip

If $m=2$ then some of the steps in the above argument are invalid; in fact,
for $m=2$ it is the case that $\beta-\gamma\geq1$ for all $k$ and this opens
the way for the existence of 2-cycles. As shown in \cite{W-R} integral orbits
of period 2 indeed exist for (\ref{hd}), yet there are no such orbits with
greater periods. I prove this fact in the next section without using the
p-adic numbers.

\section{Periodic orbits of the quadratic recursion}

Many of the results of the previous section hold when $m=2$ but as the
next lemma shows, it is no longer the case that $\beta-\gamma<1$. In the case
$m=2$ the fixed points can be determined explicitly by solving the fixed point 
equation $f(x)=x$. This is the quadratic equation $x^{2}-x-k=0$ whose solution is%
\begin{equation}
\beta=\frac{1+\sqrt{1+4k}}{2} \label{bk}%
\end{equation}

This in turn gives an explicit formula for $\gamma=\sqrt{k-\beta}$; note that
$\beta$ is an increasing function of $k$ and a simple calculation shows the
same to be true for $\gamma$. Further, if $\beta$ is an integer then so is the
other fixed point $\alpha=1-\beta$. So, unlike the higher degree cases,
\textit{integer fixed points always occur in pairs when} $m=2$.

\begin{lemma}
\label{L3}For all $k\geq2$%
\begin{equation}
1<\beta-\gamma\leq2 \label{bkc}%
\end{equation}

Further, the difference $\beta-\gamma$ is decreasing as a function of $k$ with
$\lim_{k\rightarrow\infty}(\beta-\gamma)=1$.
\end{lemma}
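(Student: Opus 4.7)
The plan is to leverage the identity derived in the proof of Lemma 2, specialized to $m=2$, which reads $\beta^{2}-\gamma^{2}=2\beta$ and factors as
\[
\beta-\gamma=\frac{2\beta}{\beta+\gamma}.
\]
This single identity almost immediately yields both sides of (\ref{bkc}). The lower bound $\beta-\gamma>1$ is equivalent to $\beta+\gamma<2\beta$, i.e.\ to $\gamma<\beta$, which is clear since $\gamma=\sqrt{k-\beta}<\beta$ (one can check $\beta^{2}>k-\beta$ is the same as $\beta^{2}+\beta>k$, which holds since $\beta^{2}-\beta=k$). For the upper bound, a brief computation using (\ref{bk}) shows that $k\geq\beta$ holds precisely when $k\geq 2$, with equality at $k=2$, where $\beta=2$, $\gamma=0$ and $\beta-\gamma=2$; for $k>2$, one has $\gamma>0$ and so $\beta-\gamma<2$.

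For the monotonicity, I would eliminate $k$ in favor of $\beta$ using $k=\beta^{2}-\beta$, which gives $\gamma^{2}=k-\beta=\beta^{2}-2\beta$ and hence
\[
\beta-\gamma=\beta-\sqrt{\beta^{2}-2\beta}.
\]
Since $\beta$ is an increasing function of $k$ by (\ref{bk}), it suffices to show that the right-hand side is decreasing in $\beta$ on $[2,\infty)$. A one-line derivative calculation reduces this to the elementary inequality $(\beta-1)^{2}>\beta^{2}-2\beta$, which is just $1>0$.

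Finally, the limit follows at once from the asymptotic $\sqrt{\beta^{2}-2\beta}=\beta\sqrt{1-2/\beta}=\beta-1+O(1/\beta)$ as $\beta\to\infty$ (which is forced by $k\to\infty$), giving $\beta-\gamma\to 1$. I anticipate no substantive obstacle: the whole argument is driven by the single algebraic identity $\beta^{2}-\gamma^{2}=2\beta$, and the only real choice is to re-parametrize by $\beta$ rather than $k$ when checking monotonicity, which turns a messy calculation into a trivial one.
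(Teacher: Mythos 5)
Your proposal is correct, and it takes a noticeably cleaner route than the paper's at two points. For the bounds in (\ref{bkc}) the paper squares each inequality separately ($\gamma^{2}\geq(\beta-2)^{2}$ reduces to $\beta\geq2$, and $\gamma^{2}<(\beta-1)^{2}$ reduces to $k-\beta<k-\beta+1$), whereas you factor the single identity $\beta^{2}-\gamma^{2}=2\beta$ into $\beta-\gamma=2\beta/(\beta+\gamma)$ and read off both bounds from $0\leq\gamma<\beta$; the content is the same elementary algebra, but your version unifies the two cases and makes transparent why equality $\beta-\gamma=2$ occurs exactly at $k=2$ (where $\gamma=0$). The more substantive divergence is in the monotonicity and the limit: the paper merely asserts that the monotonicity ``may be established by straightforward calculation using derivatives'' and then spends half a page on a conjugate-multiplication computation of $\lim_{k\to\infty}(\beta-\gamma)$ directly in the variable $k$. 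Your reparametrization $k=\beta^{2}-\beta$, giving $\beta-\gamma=\beta-\sqrt{\beta^{2}-2\beta}$, turns the monotonicity into the one-line inequality $\beta^{2}-2\beta<(\beta-1)^{2}$ and the limit into the expansion $\sqrt{\beta^{2}-2\beta}=\beta-1+O(1/\beta)$; this actually supplies the derivative argument the paper omits, and is legitimate because $\beta$ is an increasing function of $k$ by (\ref{bk}). The only cosmetic caveat is that the derivative of $\beta-\sqrt{\beta^{2}-2\beta}$ blows up at $\beta=2$, so you should phrase the monotonicity as holding on $(2,\infty)$ together with continuity at the endpoint, but this does not affect the conclusion.
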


\begin{proof}
Note that $\beta-\gamma\leq2$ if and only if $\gamma\geq\beta-2$ and this
inequality is true if and only if%
\[
k-\beta=\gamma^{2}\geq(\beta-2)^{2}=\beta^{2}-4\beta+4
\]

Since $\beta$ is a fixed point, $\beta^{2}=k+\beta$ so the above inequality is
true if and only if%
\[
-\beta\geq-3\beta+4
\]
which is true if and only if $\beta\geq2$. By \ref{bk} this
is the case if $k\geq2$.

Similarly, $1<\beta-\gamma$ if and only if%
\[
k-\beta=\gamma^{2}<(\beta-1)^{2}=\beta^{2}-2\beta+1=k-\beta+1
\]
which is obviously true.

The decreasing nature of $\beta-\gamma$ as a function of $k$ may be
established by straightforward calculation using derivatives. Now, take the
limit:%
\begin{align*}
\lim_{k\rightarrow\infty}(\beta-\gamma)  &  =\lim_{k\rightarrow\infty}\left(
\frac{1+\sqrt{4k+1}}{2}-\sqrt{k-\frac{1+\sqrt{4k+1}}{2}}\right) \\
&  =\frac{1}{2}+\lim_{k\rightarrow\infty}\left(  \sqrt{k+1/4}-\sqrt
{k-1/2-\sqrt{k+1/4}}\right)
\end{align*}

To calculate the limit of the indeterminate form we multiply and divide by the
conjugate to get:%
\[
\lim_{k\rightarrow\infty}(\beta-\gamma)=\frac{1}{2}+\lim_{k\rightarrow\infty
}\frac{3/4+\sqrt{k+1/4}}{\sqrt{k+1/4}+\sqrt{k-1/2-\sqrt{k+1/4}}}%
\]

The limit may now be determined as follows:%
\begin{align*}
\lim_{k\rightarrow\infty}(\beta-\gamma)  &  =\frac{1}{2}+\lim_{k\rightarrow
\infty}\frac{3/[4\sqrt{k+1/4}]+1}{1+\sqrt{(k-1/2)/(k+1/4)-\sqrt{k+1/4}%
/(k+1/4)}}\\
&  =\frac{1}{2}+\frac{1}{1+\sqrt{1-0}}=1
\end{align*}

This concludes the proof.
\end{proof}

\medskip

It is also useful to write (\ref{bkc}) as follows:%
\begin{equation}
\beta-2\leq\gamma<\beta-1 \label{bkc1}%
\end{equation}

Because the length of $[\gamma,\beta]$ is larger than 1 it contains at least
one integer for every $k\geq2$. I now show that for certain values of $k$ the
interval $[\gamma,\beta]$ contains\textit{ }two distinct integers. For the
exceptional value $k=2$ we have $\gamma=0$ and $\beta=2$ so $[\gamma
,\beta]=[0,2]$ contains three distinct integers.

\begin{lemma}
\label{L4}Assume that $k\geq2$ (so that $\gamma$ is real).

(a) If $k=j(j+1)$ or $k=j(j+1)+1$ for some positive integer $j$ then%
\begin{equation}
j,\,j+1\in\lbrack\gamma,\beta] \label{jj1}%
\end{equation}

(b) If $k\not =j(j+1),\,j(j+1)+1$ for all positive integers $j$ then
$[\gamma,\beta]$ contains exactly one positive integer that is different from
both $\gamma$ and $\beta$.
\end{lemma}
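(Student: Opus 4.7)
The plan is to parameterize $k$ by its location between consecutive pronic numbers. Since $k\geq 2$ and the values $j(j+1)=2,6,12,20,\ldots$ are strictly increasing and unbounded, there is a unique positive integer $j$ with $j(j+1)\leq k<(j+1)(j+2)$. Write $k=j(j+1)+r$ with $r\in\{0,1,\ldots,2j+1\}$. Parts (a) and (b) correspond exactly to $r\in\{0,1\}$ and $r\in\{2,\ldots,2j+1\}$ respectively.

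The key preliminary step is to pin down $\beta$ to the interval $[j+1,j+2)$. From the closed form $\beta=(1+\sqrt{1+4k})/2$, the inequalities on $k$ give $(2j+1)^{2}\leq 1+4k<(2j+3)^{2}$, hence $j+1\leq\beta<j+2$, with $\beta=j+1$ if and only if $r=0$. Throughout I will use the identity $\gamma^{2}=k-\beta$, which converts statements about $\gamma$ into statements about $\beta$: $\gamma\leq j$ iff $\beta\geq k-j^{2}=j+r$, and $\gamma\leq j+1$ iff $\beta\geq k-(j+1)^{2}=r-j-1$.

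For part (a), I already have $\beta\geq j+1$, and combining with $j+1\leq\beta<j+2$ gives $j+1\leq\beta$. For $\gamma\leq j$ in the two subcases, I need $\beta\geq j+r$, which for $r\leq 1$ follows from $\beta\geq j+1$. So both $j$ and $j+1$ lie in $[\gamma,\beta]$, proving (\ref{jj1}).

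For part (b), with $r\geq 2$, I show that $[\gamma,\beta]$ sits strictly inside $(j,j+2)$. First, $\beta<j+2$ is strict from the earlier inequality, and $\beta>j+1$ strictly because $r\geq 1$. Second, $\gamma>j$ strictly because $\beta<j+2\leq j+r$, so $\gamma^{2}=k-\beta>k-(j+r)=j^{2}$. For the upper bound on $\gamma$, the condition $\gamma\leq j+1$ reduces to $\beta\geq r-j-1$, which holds since $r\leq 2j+1$ forces $r-j-1\leq j<j+1\leq\beta$; moreover equality $\gamma=j+1$ would force $\beta=r-j-1\leq j$, contradicting $\beta\geq j+1$, so $\gamma<j+1$ strictly. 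Thus $\gamma\in(j,j+1)$ and $\beta\in(j+1,j+2)$, so $j+1$ is the unique integer in $[\gamma,\beta]$ and it equals neither endpoint. Positivity is automatic since $j\geq 1$.

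The only delicate point is tracking strict versus non-strict inequalities—particularly ensuring $\gamma\neq j+1$ and $\beta\neq j+1$ in part (b)—but this is handled by the $\beta\geq j+1$ bound and the case split on $r$. Everything else is a mechanical rearrangement of the fixed-point identity $\beta^{2}=\beta+k$ together with the explicit formula (\ref{bk}).
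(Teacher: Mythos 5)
Your proof is correct and follows essentially the same route as the paper: locate $k$ between consecutive pronic numbers, write $k=j(j+1)+r$, and bound $\beta$ and $\gamma$ using the explicit formula (\ref{bk}) and the identity $\gamma^{2}=k-\beta$. The only difference is one of execution — where the paper establishes part (b) by monotonicity of $\beta_{k},\gamma_{k}$ in $k$ together with checks at the extreme cases $i=2$ and $i=2j+1$, you give a uniform computation valid for every $r\in\{2,\ldots,2j+1\}$, which pins down $j<\gamma<j+1<\beta<j+2$ directly and is arguably cleaner.
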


\begin{proof}
(a) Note that if $k=j(j+1)$ for some integer $j$ then
\[
\beta=\frac{1+\sqrt{1+4j(j+1)}}{2}=\frac{1+2j+1}{2}=j+1
\]
and
\[
\gamma=\sqrt{k-\beta}=\sqrt{j^{2}-1}\,<j
\]
so (\ref{jj1}) is true for $k=j(j+1)$. Next, for $k=j(j+1)+1=j^{2}+j+1$
\[
\beta=\frac{1+\sqrt{1+4j^{2}+4j+4}}{2}=\frac{1+\sqrt{(1+2j)^{2}+4}}{2}>j+1
\]
so%
\[
\gamma=\sqrt{j^{2}+j+1-\beta}<\sqrt{j^{2}+j+1-(j+1)}=j
\]

It follows that (\ref{jj1}) is true for $k=j(j+1)+1$ also and the proof of (a)
is complete.

(b) Let $\beta_{k}$ be the fixed point of $f(x)=x^{2}-k$ and define
$\gamma_{k}=\sqrt{k-\beta_{k}}$. For all non-negative $j$ define%
\[
k_{j}=j(j+1)
\]

Note that the sequence of (even) integers $k_{j}$ is increasing as a function
of $j$ and%
\[
k_{j+1}=(j+1)(j+2)=k_{j}+2j+2
\]

Therefore, for each fixed value of $j$
\[
k_{j}<j(j+1)+i<k_{j+1},\qquad i=1,2,\ldots,2j+1
\]

By Part (a) we know that $[\gamma_{k_{j}+1},\beta_{k_{j}+1}]$ contains both
$j$ and $j+1$. Further, since $\beta_{k}$ increases with $k$ and the smallest
value of $k$ where $\beta_{k}=j+2$ is $k_{j+1}=(j+1)(j+2)$, it follows that
$j+2\not \in \lbrack\gamma_{k_{j}+i},\beta_{k_{j}+i}]$ for $i=1,2,\ldots,2j+1$.

Now we show that if $i\geq2$ then $[\gamma_{k_{j}+i},\beta_{k_{j}+i}]$
contains only one integer: $j+1$.

To prove this claim, first note that $j+1<\beta
_{k_{j}+i}$ for all $i$ because $\beta_{k}$ is an increasing function of $k$.
Similarly, $\gamma_{k}$ increases with $k$ and $\gamma_{k_{j}+2j+1}<j+1$ for
$i=2j+1$ because after squaring it we obtain%
\[
k_{j}+2j+1-\frac{1+\sqrt{1+4(k_{j}+2j+1)}}{2}<(j+1)^{2}%
\]

Substituting for $k_{j}$ and doing little algebra we see that this inequality
if and only if%
\[
2j-1<\sqrt{(2j+1)^{2}+8j+4}%
\]
which is obviously true. So $j+1\in\lbrack\gamma_{k_{j}+i},\beta_{k_{j}+i}]$
for $i=2,\ldots,2j+1$. On the other hand, for $i=2$ we have $\gamma_{k_{j}%
+2}>j$ if and only if%
\[k_{j}+2-\frac{1+\sqrt{1+4(k_{j}+2)}}{2} >j^{2}
\]
and this inequality holds if and only if
\[
2j+3 >\sqrt{(2j+3)^{2}-8j}
\]

Since the last inequality is true for $j\geq1$ our claim is justified.
Further, $\gamma_{k}$ increases with $k$ which implies that $j\not \in
\lbrack\gamma_{k_{j}+i},\beta_{k_{j}+i}]$ for $i=2,\ldots,2j+1$ and the proof
is complete.
\end{proof}

\medskip

Figure \ref{ro-v-r} illustrates the above lemma. The upper curve is $\beta
_{k}$ and the lower is $\gamma_{k}$. The dashed curve shows $\beta_{k}-1$. 


\begin{figure}[h] 
  \centering
  \includegraphics[width=4.09in,height=2.1in,keepaspectratio]{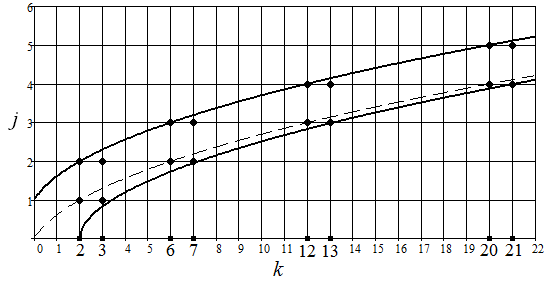}
  \caption{Bounding curves for integer solutions}
  \label{ro-v-r}
\end{figure}

The special values of $k$ where the interval $[c_{k},b_{k}]$ contains two points
are highlighted by dots and by numbers in larger font.

\begin{theorem}
\label{m}Every sequence $r_{n}$ of integers that is an orbit of (\ref{hd}) 
must satisfy one of the following conditions:

(a) If $k=j(j+1)$ for some integer $j$ then $r_{n}$ is one of two constant
sequences, $r_{n}=j+1$ or $r_{n}=-j$;

(b) If $k=j(j+1)+1$ for some integer $j$ then $r_{n}$ is the 2-cycle
$-(j+1),\,j$ for $n\geq1$;

(c) If the value of $k$ is not as given in (a) or (b) then$\ r_{n}$ diverges
to infinity. In particular, there are no p-cycles for $p>2$.
\end{theorem}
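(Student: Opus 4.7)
The plan is to combine Lemma \ref{L1} with Lemma \ref{L4} and reduce the entire classification to a short, explicit check on a handful of integers lying in $[\gamma,\beta]\cup[-\beta,-\gamma]$. First I would set up two confinement reductions that apply to every integer orbit. By Lemma \ref{L1}, once $|x_n|>\beta$ the orbit escapes to $+\infty$, so a bounded orbit must satisfy $|x_n|\le\beta$ throughout. Moreover, if $|x_n|<\gamma$ then
\[
x_{n+1}=x_n^{2}-k<\gamma^{2}-k=-\beta,
\]
so $|x_{n+1}|>\beta$ and the orbit again diverges. Hence every bounded (in particular, every periodic) integer orbit must satisfy $|x_n|\in[\gamma,\beta]$ for all $n$.

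Next I would use Lemma \ref{L4} to identify the admissible integer values. The positive integers in $[\gamma,\beta]$ are $\{j,j+1\}$ when $k=j(j+1)$ or $k=j(j+1)+1$, and a single integer $n$ in every other case with $k\ge 2$; since $f(x)=x^{2}-k$ is even, $[-\beta,-\gamma]$ contains exactly the negatives. Thus every bounded integer orbit is trapped in the two-element set $\{n,-n\}$ in case (c) and in the four-element set $\{\pm j,\pm(j+1)\}$ in cases (a) and (b).

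For case (c) the argument is then immediate: $f(n)=f(-n)=n^{2}-k$ is a single value, and for it to lie in $\{n,-n\}$ we would need $k=n(n-1)$ or $k=n(n+1)$, both of the form $j(j+1)$ and hence excluded. Therefore after one step the orbit leaves $\{n,-n\}$, and since these are the only integers in $[\gamma,\beta]\cup[-\beta,-\gamma]$, the next iterate satisfies $|x_1|<\gamma$ or $|x_1|>\beta$, forcing divergence by the first paragraph. Starting points with $|x_0|>\beta$ diverge immediately by Lemma \ref{L1}, so in case (c) every integer orbit tends to infinity.

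Cases (a) and (b) then reduce to four algebraic identities on the candidates $\pm j$ and $\pm(j+1)$. Substituting $k=j(j+1)$ gives $f(j+1)=j+1$ and $f(-j)=-j$ (the two fixed points), together with $f(j)=-j$ and $f(-(j+1))=j+1$, so the only periodic integer orbits are the constant sequences $j+1$ and $-j$. Substituting $k=j(j+1)+1$ gives $f(j)=-(j+1)$ and $f(-(j+1))=j$ (the claimed 2-cycle), with $f(j+1)=j$ and $f(-j)=-(j+1)$ showing that the remaining candidates enter that cycle after one step. The boundary values $k\le 1$ are handled earlier in the paper, and they (along with the degenerate $k=2$) fit the framework with $j\in\{0,1\}$. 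The substantive obstacle has already been absorbed into Lemma \ref{L4}; once that lemma is available, the proof is essentially bookkeeping on a small set of explicit values of $f$, and the main thing to be careful about is that leaving the small candidate set automatically means $|x_1|<\gamma$ or $|x_1|>\beta$, which is exactly what the confinement reductions of the first paragraph require.
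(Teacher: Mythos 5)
Your proposal is correct and follows essentially the same route as the paper: confinement of bounded orbits to $|x_n|\in[\gamma,\beta]$ via Lemma \ref{L1} and the $|x_n|<\gamma$ escape observation, enumeration of the candidate integers via Lemma \ref{L4}, and direct evaluation of $f$ at $\pm j$, $\pm(j+1)$ to settle cases (a) and (b) and to rule out recurrence in case (c). Your write-up is somewhat more explicit than the paper's (notably in spelling out the confinement step and in checking $f(n)\neq n$ as well as $f(n)\neq -n$), but the underlying argument is the same.
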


\begin{proof}
(a) This was established in Lemma \ref{L4}.

(b) This follows from Lemma \ref{L4}(a) and the observation that%
\begin{align*}
f(\pm j)  &  =j^{2}-[j(j+1)+1]=-j-1=-(j+1)\\
f(\pm(j+1))  &  =(j+1)^{2}-[j(j+1)+1]=j
\end{align*}

Notice that the orbit $j\rightarrow-(j+1)\rightarrow j\rightarrow
-(j+1)\rightarrow\cdots$ is the 2-cycle and each of the remaining two points
in the set $[-\beta_{k+1},-\gamma_{k+1}]\cup\lbrack\gamma_{k+1},\beta_{k+1}]$
is mapped to either $j$ or $-(j+1)$ with $k=j(j+1)$.

(c) If the value of $k$ is not as given in (a) or (b) above, i.e. if
$k\not =j(j+1),j(j+1)+1$ then by Lemma \ref{L4}(b) the set $[-\beta
_{k},-\gamma_{k}]\cup\lbrack\gamma_{k},\beta_{k}]$ contains only two points,
say, $j\in\lbrack\gamma_{k},\beta_{k}]$ and thus $-j\in\lbrack-\beta
_{k},-\gamma_{k}]$ with $j\not =\gamma_{k},\beta_{k}$ (therefore, $j$ is not a
fixed point of $f$). Further, $f(j)=j^{2}-k=-j$ if and only if $k=j(j+1)$
which is ruled out by assumption. Thus $f(j)\not =-j$ which means that $f(j)$
is not in the set $[-\beta_{k},-\gamma_{k}]\cup\lbrack\gamma_{k},\beta_{k}]$.
Thus, by Lemma \ref{L1} there are no bounded solutions in this case and 
therefore, no cycles either.
\end{proof}

\section{Extension to the general quadratic map}

In this section we discuss how to extend the results of the previous section
to the general quadratic function%
\[
Q(x)=ax^{2}+bx+c\qquad a,b,c\in\mathbb{Z\quad}a\not =0
\]

In this case $Q:\mathbb{Z\rightarrow Z}$ is a mapping of the integers and the
recursion
\begin{equation}
x_{n+1}=Q(x_{n})=ax_{n}^{2}+bx_{n}+c,\quad x_{0}\in\mathbb{Z} \label{qr}%
\end{equation}
generates integer sequences.

The key observation about $Q$ is that unlike polynomials of degree 3 or
greater, the general quadratic function $Q$ is conjugate to the special case%
\[
f(x)=x^{2}-q
\]
where $q$ is a rational number. The only difference between this mapping and
the one we studied in the previous section is that $q$ is not an integer if
$b$ is odd. Many of the results of the previous section apply to rational $q$
as well so we simply need to point out how to make the connection. We start
with the following lemma.

\begin{lemma}
\label{tr}Let $a_{i},b_{i},c_{i}$ for $i=1,2$ be fixed real numbers. If
$a_{1},a_{2}\not =0$ and
\begin{equation}
a_{1}(b_{1}+c_{1})=a_{2}(b_{2}+c_{2}) \label{cfs}%
\end{equation}
then\textit{ the mappings }$f_{i}(x)=a_{i}(x+b_{i})^{2}+c_{i}$, $i=1,2$
\textit{are topologically conjugate}; that is, there is a homeomorphism $h$
such that%
\begin{equation}
h\circ f_{1}=f_{2}\circ h \label{fh}%
\end{equation}

In fact,
\begin{equation}
h(x)=\frac{a_{1}}{a_{2}}x+\frac{a_{1}b_{1}-a_{2}b_{2}}{a_{2}} \label{h}%
\end{equation}

\end{lemma}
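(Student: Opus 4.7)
The plan is to verify the conjugacy relation (\ref{fh}) by direct substitution, after first confirming that the stated map $h$ is genuinely a homeomorphism. Since $a_1/a_2\neq 0$, the map in (\ref{h}) is an affine bijection of $\mathbb R$ with continuous inverse, so the topological requirement on $h$ is immediate. The entire content of the lemma is thus the algebraic identity $h(f_1(x))=f_2(h(x))$, and the hypothesis (\ref{cfs}) should appear precisely as the condition making the constant terms agree.

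The order I would follow is: first expand $h(f_1(x))$. Substituting $f_1(x)=a_1(x+b_1)^2+c_1$ into the linear map $h$ produces a quadratic in $x$ with leading coefficient $a_1^2/a_2$ in front of $(x+b_1)^2$ and a constant term equal to $(a_1(b_1+c_1)-a_2b_2)/a_2$. Second, I would compute $f_2(h(x))$; the key observation that makes this clean is that $h(x)+b_2$ simplifies to $(a_1/a_2)(x+b_1)$, because the $b_2$ contribution cancels exactly the $-a_2 b_2/a_2$ piece in $h$. Squaring and multiplying by $a_2$ then yields $(a_1^2/a_2)(x+b_1)^2+c_2$, which agrees with $h(f_1(x))$ in the quadratic term automatically. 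Third, equating the two constants reduces the conjugacy to $a_1(b_1+c_1)-a_2b_2=a_2c_2$, which is (\ref{cfs}).

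There is no real obstacle beyond keeping the algebra organized; the only mildly delicate point is noticing the factorization $h(x)+b_2=(a_1/a_2)(x+b_1)$, which makes the quadratic terms match on the nose and isolates the hypothesis to the constant term. I would present the computation in this order precisely because this factorization explains why the somewhat unmotivated formula (\ref{h}) is the natural choice of conjugacy: it is the unique affine map sending the vertex $-b_1$ of $f_1$ to the vertex $-b_2$ of $f_2$ with slope normalized so the leading coefficients align.
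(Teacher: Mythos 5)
Your proposal is correct and follows essentially the same route as the paper: both reduce the conjugacy to matching coefficients of an affine map against the two quadratics, with the hypothesis (\ref{cfs}) emerging as exactly the constant-term condition. The only difference is direction — the paper solves for the slope and intercept of a general affine $h$ and derives (\ref{h}), while you verify the stated $h$ directly via the factorization $h(x)+b_{2}=(a_{1}/a_{2})(x+b_{1})$, which is the same identity as the paper's condition $b_{1}=(b_{2}+\beta)/\alpha$.
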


\begin{proof}
Consider the function $h(x)=\alpha x+\beta$ which is a homeomorphism of the
set of real numbers if $\alpha\not =0$. The equality in (\ref{fh}) holds if
and only if
\begin{align*}
\alpha a_{1}(x+b_{1})^{2}+\alpha c_{1}+\beta &  =a_{2}(\alpha x+\beta
+b_{2})^{2}+c_{2}\\
\alpha a_{1}(x+b_{1})^{2}+\alpha c_{1}+\beta &  =a_{2}\alpha^{2}\left(
x+\frac{b_{2}+\beta}{\alpha}\right)  ^{2}+c_{2}%
\end{align*}

The last equality holds if $\alpha,\beta$ can be chosen so that%
\begin{equation}
\alpha a_{1}=a_{2}\alpha^{2},\quad b_{1}=\frac{b_{2}+\beta}{\alpha}%
,\quad\alpha c_{1}+\beta=c_{2} \label{cof}%
\end{equation}

The first two of the above equalities give us%
\[
\alpha=\frac{a_{1}}{a_{2}},\quad\beta=\frac{a_{1}b_{1}-a_{2}b_{2}}{a_{2}}%
\]

Further, $\alpha,\beta$ must satisfiy the third equality in (\ref{cof}):%
\begin{align*}
\alpha c_{1}+\beta &  =c_{2}\\
a_{1}c_{1}+a_{1}b_{1}-a_{2}b_{2}  &  =a_{2}c_{2}%
\end{align*}

The last equality is equivalent to (\ref{cfs}).
\end{proof}

\medskip

Next, observe that since%
\[
Q(x)=a\left(  x^{2}+\frac{b}{a}x\right)  +c=a\left(  x+\frac{b}{2a}\right)
^{2}+c-\frac{b^{2}}{4a}%
\]
the following corollary of Lemma \ref{tr} is obtained by setting%
\[
a_{1}=a,\ b_{1}=\frac{b}{2a},\ c_{1}=c-\frac{b^{2}}{4a}\quad\text{and\quad
}a_{2}=1,\ b_{2}=0
\]
in (\ref{cfs}) and using the conjugate map $h$.

\begin{lemma}
The quadratic function $Q(x)$ is topologically conjugate to the translation%
\begin{equation}
f(x)=x^{2}-q,\quad q=\frac{b^{2}}{4}-\frac{b}{2}-ac=\frac{b(b-2)}{4}-ac
\label{q}%
\end{equation}

Every orbit $r_{n}$ of (\ref{q}) uniquely corresponds via the homeomorphism
$h$ to an orbit $s_{n}$ of (\ref{qr}) as follows:
\[
r_{n}=as_{n}+\frac{b}{2}%
\]

Equivalently,%
\begin{equation}
s_{n}=\frac{r_{n}}{a}-\frac{b}{2a} \label{sr}%
\end{equation}

\end{lemma}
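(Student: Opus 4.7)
The plan is to apply Lemma \ref{tr} in a fairly direct manner, since the statement of the lemma already tells us which parameter assignments to use. The work reduces to three bookkeeping tasks: putting $Q(x)$ into the vertex form $a_1(x+b_1)^2+c_1$, matching it against the trivial vertex form of $f$, and verifying that the compatibility condition (\ref{cfs}) forces exactly the claimed value of $q$.

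First, I would complete the square on $Q$ to write
\[
Q(x)=a\left(x+\frac{b}{2a}\right)^{2}+c-\frac{b^{2}}{4a},
\]
so that $Q$ has the form $f_1(x)=a_1(x+b_1)^2+c_1$ with $a_1=a$, $b_1=b/(2a)$, and $c_1=c-b^2/(4a)$. The target $f(x)=x^2-q$ is the form $f_2(x)=a_2(x+b_2)^2+c_2$ with $a_2=1$, $b_2=0$, $c_2=-q$. Next, I would substitute these into the compatibility condition (\ref{cfs}), namely $a_1(b_1+c_1)=a_2(b_2+c_2)$, and solve for $q$. A short calculation gives
\[
a\!\left(\frac{b}{2a}+c-\frac{b^{2}}{4a}\right)=\frac{b}{2}+ac-\frac{b^{2}}{4}=-q,
\]
which rearranges to $q=b^2/4-b/2-ac=b(b-2)/4-ac$, exactly matching (\ref{q}). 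Note that $q$ is a rational number (with denominator at most $4$), as expected.

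With condition (\ref{cfs}) verified, Lemma \ref{tr} produces a topological conjugacy $h\circ Q = f\circ h$ via formula (\ref{h}):
\[
h(x)=\frac{a_1}{a_2}x+\frac{a_1b_1-a_2b_2}{a_2}=a\,x+\left(a\cdot\frac{b}{2a}-0\right)=a\,x+\frac{b}{2}.
\]
Finally, the orbit correspondence is an immediate consequence of the conjugacy relation: if $s_n$ is an orbit of $Q$ then $r_n:=h(s_n)=as_n+b/2$ is an orbit of $f$, and inverting $h$ (which is a bijection since $a\neq 0$) yields the formula $s_n=r_n/a-b/(2a)$ in (\ref{sr}).

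There is no real obstacle here; the only substantive step is the algebraic check that the compatibility condition (\ref{cfs}) produces the particular $q$ claimed. Everything else is a direct invocation of Lemma \ref{tr} together with the explicit form of the conjugating homeomorphism $h$ given in that lemma.
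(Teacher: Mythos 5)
Your proposal is correct and is essentially the paper's own argument: the paper derives this lemma as an immediate consequence of Lemma \ref{tr} with exactly the substitutions $a_{1}=a$, $b_{1}=b/(2a)$, $c_{1}=c-b^{2}/(4a)$, $a_{2}=1$, $b_{2}=0$ after completing the square, and your verification of condition (\ref{cfs}) and of $h(x)=ax+b/2$ matches what the paper leaves implicit.
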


The main issue now is to show that there are rational orbits $r_{n}$ of%
\begin{equation}
x_{n+1}=x_{n}^{2}-q \label{qe}%
\end{equation}
that yield all the integer orbits $s_{n}$ of (\ref{qr}) via (\ref{sr}).

We begin with the observation that if $b$ is even then $q$ in (\ref{q}) is an
integer so we may apply Theorem \ref{m} directly to the quadratic function
$f(x)$ in (\ref{q}) and obtain the next corollary about the integer orbits of
(\ref{qr}).

\begin{corollary}
\label{ab}Assume that $b$ is an even integer in (\ref{qr}).

(a) There is at least one integer fixed point for (\ref{qr}) if
\[
\frac{b(b-2)}{4}-ac=j(j+1)
\]
for some integer $j$. The integer fixed point is one of the following (both of
them if $a=\pm1$)
\[
\frac{j}{a}-\frac{b-2}{2a},\quad-\frac{j}{a}-\frac{b}{2a}%
\]

(b) There is an integral 2-cycle for (\ref{qr}) if
\[
\frac{b(b-2)}{4}-ac=j(j+1)+1
\]
for some integer $j$. This 2-cycle is%
\[
\frac{j}{a}-\frac{b}{2a},\quad-\frac{j}{a}-\frac{b+2}{2a}%
\]

(c) If $ac$ is not as given in (a) or (b) then every integer orbit of
(\ref{qr}) increases to $\infty$ if $a>0$ or decreases to $-\infty$ if $a<0$.
In particular, (\ref{qr}) has no integer p-cycles if $p\geq3$.
\end{corollary}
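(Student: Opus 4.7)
The plan is to leverage the topological conjugacy $s_{n} = (r_{n} - b/2)/a$ from the lemma immediately preceding the corollary, which identifies integer orbits of $Q$ with orbits of $f(x) = x^{2} - q$ where $q = b(b-2)/4 - ac$. Since $b$ is even by hypothesis, both $b/2$ and $q$ are integers, so Theorem \ref{m} applies verbatim to the integer orbits of $f$, and I only need to transport its three cases through the conjugacy.

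For part (a), if $q = j(j+1)$ then Theorem \ref{m}(a) says the only bounded orbits of $f$ are the two constant sequences $r_{n} = j+1$ and $r_{n} = -j$. Substituting into the inversion formula gives the two candidate fixed points of $Q$,
\[
\frac{j+1}{a} - \frac{b}{2a} \;=\; \frac{j}{a} - \frac{b-2}{2a}, \qquad -\frac{j}{a} - \frac{b}{2a},
\]
both of which are automatically integers when $a = \pm 1$. For part (b), if $q = j(j+1)+1$ the same theorem produces the 2-cycle $j \leftrightarrow -(j+1)$ of $f$; pushing through the conjugacy gives the pair $j/a - b/(2a)$ and $-j/a - (b+2)/(2a)$, and checking that $Q$ swaps these two values reduces to the identities $f(\pm j) = -(j+1)$ and $f(\pm(j+1)) = j$ already established in the proof of Theorem \ref{m}(b). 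For part (c), if $q$ is of neither special form, Theorem \ref{m}(c) together with Lemma \ref{L1} forces every integer orbit $r_{n}$ of $f$ eventually to increase monotonically to $+\infty$; applying $s_{n} = (r_{n} - b/2)/a$ then gives $s_{n} \to +\infty$ when $a > 0$ and $s_{n} \to -\infty$ when $a < 0$, ruling out $p$-cycles for $p \geq 3$.

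The real content is just the translation through $h$, so there is no genuine obstacle; the only point that needs care is the divisibility by $a$ in (a) and (b), which is what is implicitly asserted when one calls the displayed values integers. For $a = \pm 1$ there is nothing to check, and more generally the fact that $Q$ sends $\mathbb{Z}$ into $\mathbb{Z}$ means that once one element of a candidate cycle is shown to lie in $\mathbb{Z}$, all its iterates — in particular the second element of the 2-cycle in (b) — are automatically integers as well. Thus the bulk of the write-up is the bookkeeping of the explicit formulas, which I have already verified match the conjugate orbits of $f$.
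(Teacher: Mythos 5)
Your proposal is correct and follows exactly the route the paper takes: the paper obtains this corollary by observing that $q=\tfrac{b(b-2)}{4}-ac$ is an integer when $b$ is even and then transporting the three cases of Theorem \ref{m} through the conjugacy $s_{n}=r_{n}/a-b/(2a)$, which is precisely your argument, and your explicit formulas match. Your closing remark on divisibility by $a$ --- in particular that integrality of one point of the 2-cycle in (b) forces integrality of the other because $Q$ maps $\mathbb{Z}$ into $\mathbb{Z}$ --- is a sound clarification of a point the paper leaves implicit.
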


\medskip

Note that in the special case where $a=1$ and $b=0$ the above corollary
reduces to Theorem \ref{m} (with $k=-c$).

To illustrate Corollary \ref{ab} with an example let $j$ be any positive
integer and consider%
\begin{equation}
x_{n+1}=x_{n}^{2}+2x_{n}-l \label{gex1}%
\end{equation}
where $a=1$, $b=2$ and $c=-l$. The recursion in (\ref{gex1}) has a pair of
integer fixed points $j$ and $-j-1$ if $l=j(j+1)$ and it has an integral
2-cycle $j-1$, $-j-2$ if $l=j(j+1)+1$. There are no other cycles of
(\ref{gex1}) for any value of $l$.

For the equation%
\[
x_{n+1}=-2x_{n}^{2}+2x_{n}+1
\]
we have $a=-2$, $b=2$ and $c=1$. With $ac=-2$, Part (a) of Corollary \ref{ab}
holds if $j=1$; of the two fixed points%
\[
\frac{j}{a}-\frac{b-2}{2a}=-\frac{1}{2},\quad-\frac{j}{a}-\frac{b}{2a}=1
\]
only one is an integer. There are no proper cycles in this case.

\medskip

If $b$ is \textit{odd} then Theorem \ref{m} is not applicable, but a modified
form of Corollary \ref{ab} holds. The key observation is the following:

\begin{quote}
\textit{If }$a,b,c$\textit{ have integer values in (\ref{q}) then }%
$4q$\textit{ is an integer.}
\end{quote}

In fact,%
\[
4q=b^{2}-2b-4ac=(b-1)^{2}-1-4ac
\]

From this equality we obtain%
\[
1+4q=(b-1)^{2}-4ac
\]
which is the discriminant of the fixed point equations for both $Q$ and its
conjugate $f$. Indeed, the fixed points of $Q$ are the solutions of $Q(x)=x$,
i.e.%
\[
ax^{2}+(b-1)x+c=0
\]
which yields
\begin{equation}
\frac{-(b-1)\pm\sqrt{(b-1)^{2}-4ac}}{2a} \label{fpq}%
\end{equation}
while from $f(x)=x$, i.e.
\[
x^{2}-x-q=0
\]
we obtain%
\begin{equation}
\frac{1\pm\sqrt{1+4q}}{2} \label{fpc}%
\end{equation}

In order that the numbers in (\ref{fpq}) and (\ref{fpc}) be rational it is
necessary that under the square roots we have perfect squares.

\medskip

Now, suppose that $b$ is odd. Then from (\ref{fpq}) we obtain integers if and
only if $(b-1)^{2}-4ac$ is the square of an \textit{even} integer, i.e.%
\[
(b-1)^{2}-4ac=(2m)^{2}%
\]

The last equation may be written as%
\begin{equation}
ac=\left(  \frac{b-1}{2}\right)  ^{2}-m^{2} \label{cbo}%
\end{equation}

Thus, when $b$ is odd $Q(x)$ has integer fixed points if the product $ac$ is a
number of the above type for some integer $m$. This is how (\ref{cbo})
modifies Part (a) of Corollary \ref{ab} when $b$ is odd. For example, the
quadratic recursion%
\begin{equation}
x_{n+1}=x_{n}^{2}+x_{n}-1 \label{gex0}%
\end{equation}
with $a=b=1$ and $c=-1$ gives
\[
\left(  \frac{b-1}{2}\right)  ^{2}-1=-1=ac
\]
so $m=1$. Indeed, (\ref{gex0}) has a pair of integer fixed points $\pm1$.

\medskip

To extend these observations to cycles with lengths larger than 1 we consider
$f(x)=x^{2}-q$ and the fixed points in (\ref{fpc}). Using notation analogous
to what we previsously discussed for the case of integer $q$, define%
\[
B_{q}=\frac{1+\sqrt{1+4q}}{2},\qquad C_{q}=\sqrt{q-B_{q}}%
\]

These are the same as the earlier parameters $b_{k}$ and $c_{k}$. In fact, if
we think of $k$ (or $q$) as \textit{real} numbers then they are indeed the
same functions but now we check their values for rational $q$. Notice that
\[
q=\frac{b^{2}}{4}-\frac{b}{2}+\frac{1}{4}-ac-\frac{1}{4}=\left(  \frac{b-1}%
{2}\right)  ^{2}-ac-\frac{1}{4}%
\]

Looking back at Figure \ref{ro-v-r}, \textit{when }$b$\textit{ is odd, we
check the region between the two curves at integer values less 1/4} on the
horizontal axis; that is, at $k-1/4$ rather than at integers $k$.

With this in mind, if we set $q=k-1/4$ (or $1+4q=k$) in the square root in
$B_{q}$ then we obtain%
\[
\sqrt{1+4q}=\sqrt{1+4\left(  k-\frac{1}{4}\right)  }=2\sqrt{k}%
\]
which is rational (in fact, integral) if and only if $k=j^{2}$ is a perfect
square. This gives%
\[
B_{q}=\frac{1+2\sqrt{k}}{2}=\frac{1}{2}+j
\]

Therefore, if $q=j^{2}-1/4$ where $j$ is an integer then \textit{the fixed
point }$B_{q}$\textit{ has an integer value plus 1/2}. Figure \ref{ro-v-r2}
illustrates this relationship.


\begin{figure}[h] 
  \centering
  \includegraphics[width=3.95in,height=1.99in,keepaspectratio]{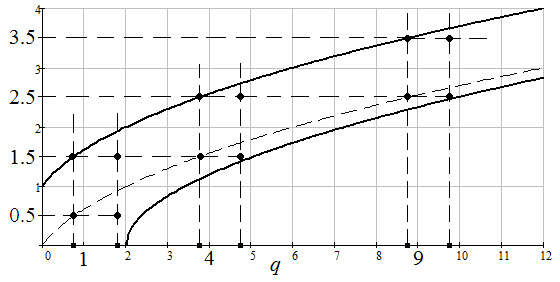}
  \caption{Bounding curves, odd $b$}
  \label{ro-v-r2}
\end{figure}

In Figure \ref{ro-v-r2} we see that for the \textquotedblleft consecutive"
values $j^{2}-1/4$ and $j^{2}-1/4+1$ a square of side 1 fits in the region
between the curves $B_{q}$ and $C_{q}$ just like the earlier case where $q$
was integral. The second fixed point of $Q(x)$ is
\[
A_{q}=\frac{1}{2}-j
\]
since $A_{q}B_{q}=-q$. Note that $A_{q}$ is in the mirror image of
$[C_{q},B_{q}]$, i.e. the interval $[-B_{q},C_{q}].$ Its negative
$-A_{q}=j-1/2$ is in $[C_{q},B_{q}]$ and this is the other point that we see
directly below $B_{q}$ in Figure \ref{ro-v-r2}.

\medskip

Earlier, in Figure \ref{ro-v-r} we saw that the 2-cycles occurred at the value
of $k$ next to the one that produced the fixed points. A similar situation
appears in Figure \ref{ro-v-r2}; the values $q=j^{2}-1/4+1$ for $q=19/4$
($j=2$) and $q=39/4$ ($j=3$) are shown. These are the 2-cycle candidates and
we need only verify this.

Note that the top points at the numbers $q=j^{2}-1/4+1$ are $\alpha=j+1/2$. If
we set $\beta=f(\alpha)$ then%
\[
\beta=\left(  j+\frac{1}{2}\right)  ^{2}-\left(  j^{2}+\frac{3}{4}\right)
=j-\frac{1}{2}%
\]
and%
\[
f(\beta)=\left(  j-\frac{1}{2}\right)  ^{2}-\left(  j^{2}+\frac{3}{4}\right)
=-\left(  j+\frac{1}{2}\right)  =-\alpha
\]

Since $f(\alpha)=f(-\alpha)=\beta$ and $f(\beta)=f(-\beta)=-\alpha$ we see
that $-\alpha,\beta,-\alpha,\beta,\ldots$ is indeed a 2-cycle in the set
\begin{equation}
\lbrack-B_{q},C_{q}]\cup\lbrack C_{q},B_{q}] \label{is}%
\end{equation}

This gives us the proper modification of Part (b) Corollary \ref{ab} when $b$
is odd. Finally, the occurrence of \textit{p}-cycles for $p\geq3$ is
prohibited because it is impossible to fit enough \textquotedblleft integer
plus half" points in the set in (\ref{is}) for each value of $q$.

We summarize these facts in the following.

\begin{corollary}
Assume that $b$ is an odd integer in (\ref{qr}).

(a) There is at least one integer fixed point for (\ref{qr}) if
\[
\left(  \frac{b-1}{2}\right)  ^{2}-ac=j^{2}%
\]
for some integer $j$. The integer fixed point is one of the following (both if
$a=\pm1$)
\[
\frac{j}{a}-\frac{b-1}{2a},\quad-\frac{j}{a}-\frac{b-1}{2a}%
\]

(b) There is an integer 2-cycle for (\ref{qr}) if
\[
\left(  \frac{b-1}{2}\right)  ^{2}-ac=j^{2}+1
\]
for some integer $j$. This 2-cycle is%
\[
-\frac{j}{a}-\frac{b+1}{2a},\quad\frac{j}{a}-\frac{b+1}{2a}%
\]

(c) If $ac$ is not as given in (a) or (b) then every integer orbit of
(\ref{qr}) increases to $\infty$ if $a>0$ or decreases to $-\infty$ if $a<0$.
In particular, (\ref{qr}) has no integer p-cycles if $p\geq3$.
\end{corollary}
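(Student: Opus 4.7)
The strategy is to transfer the analysis of Theorem \ref{m} from integer orbits of $f(x)=x^2-k$ to half-integer orbits of $f(x)=x^2-q$ via the conjugacy $h$ of Lemma \ref{tr}, and then to read off the integer orbits of $Q$ via $s_n=r_n/a - b/(2a)$. Since $b$ is odd, $b/(2a)$ shifts integers by a half-integer, so $s_n\in\mathbb{Z}$ if and only if the corresponding $r_n$ lies in the lattice $\mathbb{Z}+1/2$. The task therefore reduces to classifying the bounded half-integer orbits of (\ref{qe}) when $q = b(b-2)/4 - ac$, which is a rational of the form ``integer minus $1/4$''.

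For part (a) I would derive the fixed points directly from $Q(x)=x$ using the quadratic formula (\ref{fpq}). Since $b$ is odd, $b-1$ is even, so both roots lie in $\mathbb{Z}/|a|$ precisely when $(b-1)^2-4ac=(2j)^2$, equivalently $((b-1)/2)^2-ac=j^2$. Substituting back produces the two candidates in the stated form, and exactly one of them is an integer unless $a=\pm 1$, in which case both are. For part (b) I would invoke the explicit half-integer 2-cycle $\{-(j+1/2),\,j-1/2\}$ of $f(x)=x^2-q$ exhibited in the paragraph preceding the corollary, which exists precisely when $q=j^2+3/4$. Under the conjugacy this condition translates into $((b-1)/2)^2-ac = j^2+1$, and applying $h^{-1}$ to each half-integer in the cycle yields the 2-cycle stated in the corollary.

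Part (c) is the most delicate, and here the plan is to reproduce the argument of Theorem \ref{m} in the half-integer setting. Lemma \ref{L1} applies verbatim to the real fixed point $B_q$, so any bounded orbit must lie in $[-B_q,B_q]$; and the same computation as in the proof of Lemma \ref{L3}, which depends only on the real variables, shows that $1<B_q-C_q\le 2$ in the relevant range. Hence each of $[C_q,B_q]$ and $[-B_q,-C_q]$ contains at most two half-integers. The key observation is that if $[C_q,B_q]$ contains two distinct half-integers then the larger must equal $B_q$, forcing $B_q=j+1/2$ and placing us in case (a); and if the only admissible half-integers $\pm(j+1/2)$ form a 2-cycle, a direct calculation of $f(j+1/2)=-(j+1/2)$ or $f(j+1/2)=j-1/2$ forces $q=j^2+3/4$, which is case (b).

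The main obstacle is the bookkeeping for part (c): one must verify, in the style of Lemma \ref{L4}, that outside the exceptional values $((b-1)/2)^2-ac\in\{j^2,\,j^2+1\}$ the set $[-B_q,-C_q]\cup[C_q,B_q]$ contains no half-integer whose $f$-image lies back in that set. Once this is established the non-existence of $p$-cycles for $p\ge 3$ follows exactly as in the proof of Theorem \ref{m}, and the sign of $a$ in $r_n=as_n+b/2$ determines whether the divergent orbit $s_n$ tends to $+\infty$ (when $a>0$) or $-\infty$ (when $a<0$).
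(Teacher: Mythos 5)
Your overall route --- push integer orbits of $Q$ through the conjugacy into the lattice $\mathbb{Z}+\tfrac12$ for $f(x)=x^{2}-q$ with $q\in\mathbb{Z}-\tfrac14$, read off parts (a) and (b) from the explicit fixed points and the explicit half-integer 2-cycle, and redo Lemma \ref{L4} on the shifted lattice to get (c) --- is exactly the paper's route, and parts (a) and (b) are fine (modulo the small point that $r_{n}\in\mathbb{Z}+\tfrac12$ only gives $s_{n}\in\tfrac1a\mathbb{Z}$ rather than $s_{n}\in\mathbb{Z}$, which is why the corollary hedges with ``one of the following, both if $a=\pm1$'').

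However, the ``key observation'' you state for part (c) is false, and it is the one step you would actually need. You claim that if $[C_{q},B_{q}]$ contains two distinct half-integers then the larger must equal $B_{q}$, placing you in case (a). Take $q=j^{2}+\tfrac34$, i.e.\ case (b). Then $1+4q=4j^{2}+4$, so $2j<\sqrt{1+4q}<2j+1$ for $j\geq1$, giving $j+\tfrac12<B_{q}<j+1$; and $C_{q}^{2}=q-B_{q}<j^{2}+\tfrac34-(j+\tfrac12)=(j-\tfrac12)^{2}$, so $C_{q}<j-\tfrac12$. Hence $[C_{q},B_{q}]$ contains the two half-integers $j-\tfrac12$ and $j+\tfrac12$, neither equal to $B_{q}$. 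Your dichotomy (two lattice points $\Rightarrow$ fixed-point case; one lattice point $\Rightarrow$ possible 2-cycle) therefore misclassifies precisely the 2-cycle case. The correct analogue of Lemma \ref{L4} is that $[C_{q},B_{q}]$ contains two points of $\mathbb{Z}+\tfrac12$ exactly when $q=j^{2}-\tfrac14$ or $q=j^{2}+\tfrac34$, and exactly one otherwise, in which case that point is not a fixed point and $f$ maps it outside $[-B_{q},-C_{q}]\cup[C_{q},B_{q}]$, whence the Lemma \ref{L1} argument gives divergence. You flag this verification as ``bookkeeping'' and leave it undone; to be fair, the paper also only asserts it (``it is impossible to fit enough integer-plus-half points''), but since this is the entire content of part (c) you cannot both omit the verification and hang part (c) on an intermediate claim that is wrong as stated.
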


\medskip

For instance, consider the quadratic equation%
\begin{equation}
x_{n+1}=x_{n}^{2}+x_{n}-2 \label{gex}%
\end{equation}
with $a=b=1$ and $c=-2$ satisfies the conditions of the above corollary with
$j=1$ since%
\[
\left(  \frac{b-1}{2}\right)  ^{2}-ac=2=j^{2}+1
\]

So there is a 2-cycle whose points are%
\[
-\frac{j}{a}-\frac{b+1}{2a}=-1-1=-2,\quad\frac{j}{a}-\frac{b+1}{2a}=1-1=0
\]

The periodic integer solution of (\ref{gex}) is $-2,0,-2,0,\ldots$ which can
be easily verified by direct substitution into (\ref{gex}).

\medskip

To summarize, we have seen that regarding the existence of periodic integral orbits 
the case $m=2$ is different from all larger, even values of $m$. Another important
difference between these cases is the fact that the general $m$-th degree
polynomial for $m\geq3$ is not linearly conjugate to the simple translation
$x^{m}-k$. 

It is an open question as to whether there are higher degree polynomials whose
iteration generates nontrivial cycles for certain values of integer
coefficients. 

\medskip

Polynomials with \textit{rational} coefficients may well have
\textit{integral} orbits. This is ensured by restricting the coefficient of
the linear term and also the constant term. Consider%
\[
x_{n+1}=a_{m}x_{n}^{m}+a_{m-1}x_{n}^{m-1}+\cdots+a_{2}x_{n}^{2}+a_{1}%
x_{n}+a_{0}%
\]
where $a_{i}$ is rational for $i=0,1,2,\ldots m$ and $a_{m}\not =0$. Assume
that $a_{1}\in\mathbb{Z}$ and let
\[
l=\operatorname{lcm}(d_{2},\ldots,d_{m})
\]
where $d_{i}$ is the denominator of $a_{i}$ for $i=2,\ldots m$. If%
\[
\frac{a_{0}}{l}\in\mathbb{Z}%
\]
then the polynomial function%
\[
f(x)=a_{m}x^{m}+a_{m-1}x^{m-1}+\cdots+a_{2}x^{2}+a_{1}x+a_{0}%
\]
maps the ideal $l\mathbb{Z}$ into itself so if we choose $x_{0}\in
l\mathbb{Z}$ we ensure that $x_{n}\in l\mathbb{Z\subset Z}$ for all $n$.

\medskip

Going further, it would be interesting to characterize possible orbits of 
(\ref{hd}) in the set of all rational numbers.

Alternatively, we may consider the orbits of (\ref{hd}) in finite rings such
as $\mathbb{Z}_{m}$ of integers modulo $m$. Note that all orbits of (\ref{hd})
are necessarily eventually periodic in a finite ring. An interesting question
in this context is what the \textit{maximum length} of a cycle is for a given
$m$.


\end{document}